\newtheorem{theorem}{Theorem}[section]
\newtheorem{definition}[theorem]{Definition}
\newtheorem{lemma}[theorem]{Lemma}
\newtheorem{corollary}[theorem]{Corollary}
\DeclareMathOperator{\stab}{stab}
\author
{
Rutger Campbell
}
\author 
{
J.~Pascal Gollin
}
\author
{
Kevin Hendrey
}
\author
{
Raphael Steiner 
}
\address[Campbell, Gollin, Hendrey]{Discrete Mathematics Group, Institute for Basic Science (IBS), 55 Expo-ro, Yuseong-gu, Daejeon, Korea, 34126}
\email{\tt $\{$rutger,pascalgollin,kevinhendrey$\}$@ibs.re.kr}
\address[Steiner]{Department of Computer Science, Institute of Theoretical Computer Science, ETH Z\"{u}rich, Switzerland}
\email{raphaelmario.steiner@inf.ethz.ch}
\thanks{The first and third authors were supported by the Institute for Basic Science (IBS-R029-C1).}
\thanks{The second author was supported by the Institute for Basic Science (IBS-R029-Y3).}
\thanks{The fourth author was supported by the SNSF Ambizione Grant No. 216071.}
\date{\today}
\title{Optimal bounds for zero-sum cycles. I. Odd order}
\begin{document}
\maketitle

\begin{abstract}
For a finite (not necessarily Abelian) group $(\Gamma,\cdot)$, let $n(\Gamma)$ denote the smallest positive integer $n$ such that for each labelling of the arcs of the complete digraph of order $n$ using elements from $\Gamma$, there exists a directed cycle such that the arc-labels along the cycle multiply to the identity. Alon and Krivelevich~\cite{AK} initiated the study of the parameter $n(\cdot)$ on cyclic groups and proved $n(\mathbb{Z}_q)=O(q \log q)$. This was later improved to a linear bound of ${n(\Gamma)\le 8|\Gamma|}$ for every finite Abelian group by M\'{e}sz\'{a}ros and the last author~\cite{MS}, and then further to $n(\Gamma)\le 2|\Gamma|-1$ for every non-trivial finite group independently by Berendsohn, Boyadzhiyska and Kozma~\cite{BBK} as well as by Akrami, Alon, Chaudhury, Garg, Mehlhorn and Mehta~\cite{AACGMM}. 

In this series of two papers we conclude this line of research by proving that $n(\Gamma)\le |\Gamma|+1$ for every finite group $(\Gamma,\cdot)$, which is the best possible such bound in terms of the group order and precisely determines the value of $n(\Gamma)$ for all cyclic groups as $n(\mathbb{Z}_q)=q+1$. 

In the present paper we prove the above result for all groups of odd order. The proof for groups of even order needs to overcome substantial additional obstacles and will be presented in the second part of this series.
\end{abstract}

\section{Introduction}\label{sec:intro}

In 1961, Erd\H{o}s, Ginzburg and Ziv~\cite{EGZ} proved that every sequence of $2m-1$ elements in the cyclic group $\mathbb{Z}_m$ contains a subsequence of length $m$ of total sum equal to zero.
This pioneering result sparked an active research area which has come to be known as ``zero-sum Ramsey theory''
 and which spans several disciplines including combinatorics, number theory, and algebra.
 For a general background on other foundational results and problems in this area, we refer the reader to the survey of Caro \cite{C}.

In this sequence of two papers we study a zero-sum Ramsey problem for group-labelled complete\footnote{A \emph{complete} digraph is a digraph having arcs $(x,y),(y,x)$ for any two distinct vertices $x,y$.} digraphs. 
For a group $(\Gamma,\cdot)$, a \emph{$\Gamma$-labelling} of a digraph $D$ is a map from the set $A(D)$ of arcs of $D$ to $\Gamma$, and the pair $(D,\gamma)$ is called a \emph{$\Gamma$-labelled digraph}.
We use the term \emph{balanced cycle} to refer to a directed cycle for which the cumulative product\footnote{We stick to the convention of using product notation and terminology for general groups, while reserving summation notation and terminology for Abelian groups. The topic originated with the study of Abelian groups, where balanced cycles are more commonly referred to as `zero-sum cycles'.} of its arc-labels in order along the cycle is equal to the identity (it is not hard to verify that this does not depend on which vertex of the cycle is taken as the starting point).
The main result of this paper is the following.
\begin{theorem}\label{thm:main}
    Let $\Gamma$ be a finite group of odd order.
    If $D$ is a complete digraph on~$|\Gamma|+1$ vertices,
    then for any $\Gamma$-labelling of $D$,
    there is a balanced cycle.
\end{theorem}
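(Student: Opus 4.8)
The goal is to show that any $\Gamma$-labelling of the complete digraph $K_n$ with $n = |\Gamma|+1$ (for $\Gamma$ of odd order) admits a balanced cycle. My strategy would be to proceed by contradiction: fix a labelling $\gamma$ with no balanced cycle, and extract strong structural information from this assumption. The first and most natural reduction is the standard ``potential/normalization'' trick for group-labelled graphs: for any function $p\colon V(D)\to\Gamma$, replacing each arc label $\gamma(x,y)$ by $p(x)^{-1}\gamma(x,y)p(y)$ does not change which cycles are balanced (the product telescopes around a cycle, up to conjugation by the base point, which does not affect being the identity). Using this gauge freedom, I would try to normalize the labelling on a spanning structure — for instance, fix a vertex $v_0$ and arrange that all arcs from $v_0$ carry the identity label, or more ambitiously normalize along a Hamiltonian path or a spanning star. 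The key point is that after normalization, a directed $2$-cycle $xy x$ is balanced iff $\gamma(y,x) = \gamma(x,y)^{-1}$, and longer cycles become products of the remaining ``reduced'' labels.

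**Key steps.** (1) Assume no balanced cycle; in particular no balanced $2$-cycle, so for every unordered pair $\{x,y\}$ we have $\gamma(y,x)\neq \gamma(x,y)^{-1}$. (2) Normalize via a potential so that the labelling has a convenient form on a spanning tree (star or path). (3) Consider, for each vertex $x$, the ``displacement'' multiset of reduced labels on arcs leaving $x$, or alternatively build an auxiliary map $\phi\colon V(D)\to\Gamma$ and study when $\phi(x)\cdot(\text{label})\cdot\phi(y)^{-1}$ hits the identity. The counting heart of the argument should be an Erd\H{o}s--Ginzburg--Ziv-style or pigeonhole statement: since we have $|\Gamma|+1$ vertices but only $|\Gamma|$ group elements, some two of the $|\Gamma|+1$ relevant partial products coincide, and I would want to arrange that the ``difference'' of those two partial products corresponds precisely to the label product around a directed cycle in $D$, forcing it to be balanced. (4) The oddness of $|\Gamma|$ should enter to rule out the obstruction coming from involutions: the pair inequality $\gamma(y,x)\neq\gamma(x,y)^{-1}$ is symmetric only if there are elements of order $2$, and in odd order every element has a unique square root, which should let me pair up / orient things consistently (e.g. every element $g$ with $g\neq g^{-1}$, which is all non-identity $g$ here, comes in genuine pairs $\{g,g^{-1}\}$, and $x\mapsto x^{2}$ is a bijection). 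I expect the argument to set up a tournament-like orientation or a system of distinct representatives and then invoke a parity or fixed-point count that only closes when $|\Gamma|$ is odd — which is exactly consistent with the paper's remark that even order needs substantially new ideas.

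**Main obstacle.** The hard part will be step (3): turning a bare pigeonhole collision among $|\Gamma|+1$ partial products into an honest directed cycle in the complete digraph whose labels multiply to the identity. A collision $P_i = P_j$ among prefix products along some walk gives a closed sub-walk, but one must ensure it is a genuine directed cycle (no repeated vertices) — this is where completeness of the digraph and the freedom to choose the order in which vertices are visited must be exploited, likely via a greedy or extremal choice of the walk (e.g. choosing at each step the arc that makes the new partial product ``new''). I anticipate needing an inductive or extremal setup: take a longest directed path along which all prefix products are distinct; by maximality, the arc from its endpoint back to some earlier vertex (which exists by completeness) must create a repeated prefix product, hence a balanced cycle — unless all $|\Gamma|$ values are already used along a Hamiltonian path, at which point the extra vertex forces the collision anyway. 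Making this robust, and in particular handling the interaction with the normalization and the odd-order pairing, is the crux; secondary difficulties include ensuring the normalization can be done on a rich enough spanning subgraph and carefully tracking conjugation (non-Abelian) issues, which break the naive ``sum of displacements'' intuition and require working with the actual ordered products.
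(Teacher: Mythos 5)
There is a genuine gap, and it sits exactly where you predicted: step (3)/(4) does not close, and the fix you sketch does not work. A collision $P_i=P_j$ among prefix products along a directed path only tells you that the subpath from $v_i$ to $v_j$ has label product equal to the identity; that is a \emph{path}, not a cycle. To close it you must traverse the back-arc $(v_j,v_i)$, and the resulting cycle has product equal to $\gamma(v_j,v_i)$ (after the telescoping), which need not be $\mathbf{1}$. So ``longest path with distinct prefix products, then maximality forces a repeated prefix product, hence a balanced cycle'' is simply false as stated: what you actually need is that the set of values attainable by $u$--$v$ paths inside your vertex set contains the specific element $\gamma(v,u)^{-1}$ for some back-arc $(v,u)$, and a bare pigeonhole count of $|\Gamma|+1$ prefix products along one walk gives you no control over which values are attained by paths ending at a prescribed vertex. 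Arguments of the kind you describe (grow a path so that the attainable values stay distinct, then use completeness) are essentially what underlies the earlier bounds of roughly $2|\Gamma|$; they cannot by themselves reach $|\Gamma|+1$, because one loses a factor of two between ``paths from $u$'' and ``paths from $u$ ending at $v$''.

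The paper's proof closes this gap with machinery your proposal does not contain: a key lemma producing, in any balanced-cycle-free complete labelled digraph on $|\Gamma|$ vertices, a set of attainable path values of size at least $|X|$ whose \emph{right stabilizer is a non-trivial subgroup} (obtained from an auxiliary digraph of ``almost-spanning value sets'' and a non-balanced cycle acting on them), followed by an induction on $|\Gamma|$ over proper subgroups in which ``efficient'' tuples are repeatedly augmented by two vertices at a time, the growth being powered by Lemma~\ref{lemma:addtwoelements} ($|\{\mathbf{1},x\}\cdot S|\ge |S|+|\stab_r(S)|$ when $x\notin\stab_l(S)$). Your proposed use of odd order (unique square roots, pairing $g$ with $g^{-1}$, a parity or fixed-point count) is also not the mechanism that works and there is no indication it could be made to: in the actual proof oddness enters only through Lagrange's theorem, namely that $\Gamma$ has no subgroup of order $2$ or $|\Gamma|/2$, so the non-trivial stabilizers have size at least $3$ and proper subgroups have index at least $3$, which is what makes the two-vertex augmentation and the subgroup induction go through. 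Without the stabilizer idea and the induction on subgroups, your plan has no route from the pigeonhole collision to a balanced cycle on only $|\Gamma|+1$ vertices.
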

In general for a finite group $\Gamma$, we denote by $n(\Gamma)$ the smallest positive integer $n$ such that every $\Gamma$-labelled complete digraph on $n$ vertices contains a balanced cycle.
Theorem~\ref{thm:main} is best possible since the lower bound $n(\mathbb{Z}_q)\geq q+1$ holds for every cyclic group $\mathbb{Z}_q$.
To see this, for a complete digraph with an ordered vertex set, consider the $\mathbb{Z}_q$-labelling which assigns a generator $g$ to increasing arcs and the identity $\mathbf{0}$ to decreasing arcs, and note that every directed cycle contains between $1$ and $|V(D)|-1$ increasing arcs.  Thus, our result precisely determines the value of $n(\Gamma)$ for all cyclic groups of odd order, proving that $n(\mathbb{Z}_q)=q+1$ for all positive odd integers $q$. 
In fact, the above construction gives a complete $\mathbb{Z}_q$-labelled digraph on $q+1$ vertices in which there is a unique balanced cycle.
By deleting any arc of this cycle, we get a $\mathbb{Z}_q$-labelled digraph with no balanced cycle. 
Thus, the complete digraph on $q+1$ vertices is arc-critical
with respect to the property of admitting a $\mathbb{Z}_q$-labelling with no balanced cycles. 

The parameter $n(\Gamma)$ was first studied by Alon and Krivelevich~\cite{AK}, who proved that every graph containing a complete graph on $2n(\mathbb{Z}_q)$ vertices as a minor (i.e., of Hadwiger number at least $2n(\mathbb{Z}_q)$) necessarily contains a cycle of length divisible by $q$.
Using this result, we immediately obtain the following corollary of Theorem~\ref{thm:main}.
\begin{corollary}\label{cor:hadwigercycleramsey}
For every odd $q\in \mathbb{N}$, every graph containing $K_{2q+2}$ as a minor contains a cycle of length divisible by $q$.    
\end{corollary}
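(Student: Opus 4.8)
The statement to prove is Corollary~\ref{cor:hadwigercycleramsey}: for odd $q$, every graph $G$ with $K_{2q+2}$ as a minor contains a cycle of length divisible by $q$. This is essentially a black-box deduction combining Theorem~\ref{thm:main} with the Alon--Krivelevich reduction already cited in the excerpt, so the plan is to unpack that reduction. By Theorem~\ref{thm:main} applied to $\Gamma = \mathbb{Z}_q$ (which has odd order), we have $n(\mathbb{Z}_q) \le q+1$, hence $2n(\mathbb{Z}_q) \le 2q+2$, so $G$ has Hadwiger number at least $2n(\mathbb{Z}_q)$ and the Alon--Krivelevich theorem directly gives a cycle of length divisible by $q$. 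So in principle the corollary is immediate. However, to make the paper self-contained I would instead reprove the relevant special case of the Alon--Krivelevich implication directly from Theorem~\ref{thm:main}, since the argument is short.

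The key step is to turn a $K_{2q+2}$-minor into a $\mathbb{Z}_q$-labelled complete digraph on $q+1$ vertices. First I would invoke the well-known fact that if $G$ has $K_{t}$ as a minor with $t = 2(q+1)$, then $G$ contains $t/2 = q+1$ pairwise disjoint connected subgraphs (``branch sets'') $V_1, \dots, V_{q+1}$, each containing a pair of ``terminal'' vertices, together with internally disjoint paths joining terminals of distinct branch sets; more precisely, one extracts a \emph{half-integral} or \emph{$2$-connected} model so that between any two branch sets there is a path, and each branch set is connected enough to route between its two prescribed attachment points. The standard trick (this is exactly how Alon--Krivelevich proceed) is: take the $K_{2(q+1)}$-minor, pair up its $2(q+1)$ branch sets into $q+1$ ``super-nodes'' so that within each super-node the two branch sets are joined by an edge, giving $q+1$ connected pieces $B_1, \dots, B_{q+1}$, and between any two of them there are \emph{two} vertex-disjoint connecting structures. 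Fix spanning trees of each $B_i$; for each ordered pair $(i,j)$ we get a path $P_{ij}$ from $B_i$ to $B_j$, and we orient and assemble these paths with the tree-paths inside the $B_i$'s to build, for the complete digraph $D$ on vertex set $\{1, \dots, q+1\}$, an arc $(i,j)$ realized by an actual walk $W_{ij}$ in $G$. Label arc $(i,j)$ of $D$ by $\ell(W_{ij}) \bmod q \in \mathbb{Z}_q$, where $\ell$ denotes length (number of edges). The point of having two disjoint connectors between each pair is to ensure that when we later concatenate walks around a directed cycle of $D$, we can splice them into an honest cycle of $G$ without repeated vertices — one uses one connector ``going out'' and the other ``coming back'', or more carefully routes around each branch set once.

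Now apply Theorem~\ref{thm:main} to this $\mathbb{Z}_q$-labelled $D$ on $q+1 = |\mathbb{Z}_q| + 1$ vertices: there is a balanced directed cycle $i_1 \to i_2 \to \cdots \to i_k \to i_1$ in $D$, meaning $\sum_{s} \ell(W_{i_s i_{s+1}}) \equiv 0 \pmod q$. Concatenating the corresponding walks $W_{i_s i_{s+1}}$ in $G$ produces a closed walk of length divisible by $q$; the heart of the remaining work is to argue this closed walk \emph{contains} an actual cycle whose length is still divisible by $q$ — here one must be slightly careful, since a closed walk of length $\equiv 0 \pmod q$ need not decompose into cycles each of length $\equiv 0 \pmod q$. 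The standard fix (again following Alon--Krivelevich) is to be disciplined about the construction so that the closed walk traverses each branch set $B_{i_s}$ exactly once, entering at one attachment vertex and leaving at another along a tree path, and uses genuinely disjoint connector paths between consecutive super-nodes, so that the resulting closed walk is in fact a cycle of $G$ of length exactly $\sum_s \ell(W_{i_s i_{s+1}}) \equiv 0 \pmod q$. The main obstacle, and the only place requiring genuine care, is this last disjointness/routing bookkeeping: guaranteeing from a $K_{2q+2}$-minor that one has enough disjoint connecting paths to realize \emph{every} directed cycle of $D$ simultaneously as a genuine cycle in $G$. Everything else is a direct application of Theorem~\ref{thm:main} together with $|\mathbb{Z}_q|+1 = q+1$, the choice of labels as lengths modulo $q$, and the elementary observation that every directed cycle in a complete digraph on $q+1$ vertices has between $1$ and $q$ arcs (so the corresponding cycle in $G$ is nonempty).
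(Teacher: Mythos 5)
Your first paragraph is exactly the paper's proof: Theorem~\ref{thm:main} gives $n(\mathbb{Z}_q)\le q+1$ for odd $q$, hence $2n(\mathbb{Z}_q)\le 2q+2$, and the cited Alon--Krivelevich result is then applied as a black box, which is all the paper does. Your additional sketch re-deriving the Alon--Krivelevich reduction is unnecessary for the corollary (and its splicing/disjointness bookkeeping is only gestured at, not carried out), but since you also give the direct black-box deduction, your argument is correct and essentially identical to the paper's.
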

In the sequel to this paper, we will show that Theorem~\ref{thm:main} extends to groups of even order, and thus that Corollary~\ref{cor:hadwigercycleramsey} holds for all positive integers~$q$.
We choose to separate our results in this way since the proof for groups of odd order is significantly shorter and easier to digest, while containing many of the important ideas which are needed for the even case.
The increased difficulty of bounding~$n(\Gamma)$ for even ordered~$\Gamma$ arises from the existence of subgroups of order~$2$ and of order~$\frac{|\Gamma|}{2}$. 
Dealing with subgroups of these orders requires a long technical analysis which can be avoided entirely when considering groups of odd order.

We now present a brief summary of the literature regarding the parameter $n(\Gamma)$.
In their initial paper, Alon and Krivelevich proved the bound $n(\mathbb{Z}_q)\le O(q \log q)$ via a beautiful probabilistic argument~\cite{AK}. They also showed an improved bound of $n(\mathbb{Z}_p)\le 2p-1$ when $p$ is prime. 
Their result on cyclic groups was improved to the linear bound $n(\Gamma)\le 8|\Gamma|$ for all Abelian groups $(\Gamma,+)$ by M\'{e}szar\'{o}s and the last author~\cite{MS}, who also showed that $n(\mathbb{Z}_p)\le \frac{3p}{2}$ for every prime $p$ (which was the best known result for prime ordered groups prior to our paper). 
The previous best known bound for general groups of non-prime order was obtained independently by Berendsohn, Boyadzhiyska and Kozma~\cite{BBK} and by Akrami, Alon, Chaudhury, Garg, Mehlhorn and Meta~\cite{AACGMM}, who showed that $n(\Gamma)\le 2|\Gamma|-1$ for every non-trivial finite group $(\Gamma,\cdot)$. Somewhat orthogonally to this line of research, Letzter and Morrison~\cite{ML} recently studied the problem of bounding $n(\Gamma)$ for groups that are far from cyclic, and obtained the sublinear bound of $n(\mathbb{Z}_p^k)\le O(pk(\log k)^2)$ for large powers of cyclic groups of prime order.

\paragraph*{\textbf{Overview.}} 
The rest of this paper is organized as follows. In the next paragraph, we collect some notation, terminology and basic definitions related to digraphs, groups and group-labellings of digraphs. In Section~\ref{sec:key}, we present and prove our key lemma (Lemma~\ref{lemma:getnontrivialstabilizer}), which is central to the proof of Theorem~\ref{thm:main}.
This key lemma is already strong enough to give a direct proof of the equality $n(\mathbb{Z}_p)=p+1$ for all prime numbers $p$, and we present the short deduction of this special case in Section~\ref{sec:key} as well. After that, in Section~\ref{sec:proof}, we complete the proof of Theorem~\ref{thm:main}. 

\medskip

\paragraph*{\textbf{Notation and terminology.}}
In the following, let $D$ be a digraph, whose vertex and arc sets we denote by~$V(D)$ and~$A(D)$, respectively. As usual, for a subset $X \subseteq V(D)$ we denote by $D[X]$ the subdigraph of $D$ induced by $X$, and by~${D-X}$ the subdigraph $D[V(D)\setminus X]$ obtained by deleting $X$. Let $(\Gamma,\cdot)$ be a group, and let $\gamma$ be a $\Gamma$-labelling of $D$. 
For a directed path~$P$ in $D$ with vertex set ${\{ v_i \colon i \in [\ell+1] \}}$ and arc set~${\{ (v_{i}, v_{i+1}) \colon i \in [\ell] \}}$, we denote by~$\gamma(P)$ the cumulative product of the arc-labels in order along the path, that is ${\gamma(P) := \prod_{i=1}^{\ell} \gamma(v_i, v_{i+1})}$. 
Note that if $P$ is a path consisting only of a single vertex, then this is an empty product, and therefore $\gamma(P)$ equals the neutral element $\mathbf{1}$. 
For a subset $X$ of $V(D)$ and vertices $x, y \in X$ we denote by $\mathcal{P}_{D}(X,x)$ the set of all directed paths in $D[X]$ that end at $x$ (including the path of length $0$ that consists only of $x$ itself), and similarly, we denote by $\mathcal{P}_{D}(X,x,y)$ the set of all directed paths in $D[X]$ that start at $x$ and end at $y$. 
We denote by ${R_{D,\gamma}(X,x):=\{\gamma(P)| P \in \mathcal{P}_{D}(X,x)\}}$ and ${R_{D,\gamma}(X,x,y):=\{\gamma(P)| P \in \mathcal{P}_{D}(X,x,y)\}}$ the set of $\gamma$-values attained by these paths.
Given a subset~${S \subseteq \Gamma}$ of elements, we denote by 
\[
    \stab_l(S) := \{g \in \Gamma \colon g \cdot S = S \},\ 
    \stab_r(S) := \{g \in \Gamma \colon S \cdot g = S\}
\]
the \emph{left and right stabilizers} of the set~$S$ respectively. 
Note that these form subgroups of~$\Gamma$, and that $S$ can be written as a union of cosets of $\stab_l(S)$ or of cosets of $\stab_r(S)$.

Finally, let us introduce an important operation that we call \emph{shifting} that can be used to modify a given $\Gamma$-labelling without changing the set of balanced cycles. 
This operation also played a pivotal role in previous papers~\cite{AACGMM,BBK,ML,MS} on this topic.

We say that a $\Gamma$-labelling $\gamma'$ of $D$ is obtained from~$\gamma$ \emph{by shifting by $g\in \Gamma$ at $v\in V(D)$} if
\begin{enumerate}
    \item $\gamma'(u,v)=\gamma(u,v)\cdot g^{-1}$ for every in-neighbour $u$ of $v$,
    \item $\gamma'(v,w)=g\cdot\gamma(v,w)$ for every out-neighbour $w$ of $v$, and 
    \item $\gamma'(e)=\gamma(e)$ for every arc $e$ of $D-v$.
\end{enumerate}

If two $\Gamma$-labellings can be obtained form each other via a sequence of shifting operations, we say that they are \emph{shifting-equivalent}. Pause to note that shifting-equivalent labellings have the same collection of balanced cycles. In particular, if there are no balanced cycles in a $\Gamma$-labelled digraph $(D,\gamma)$, then there are also no balanced cycles with respect to any $\Gamma$-labelling that is shifting-equivalent to $\gamma$. 

Another operation on $\Gamma$-labellings that we need is called \emph{inverting}. Given a $\Gamma$-labelling $\gamma$ of a complete digraph $D$, we denote by $\mathrm{inv}(\gamma)$ the $\Gamma$-labelling of $D$ defined as $$\mathrm{inv}(\gamma)(u,v):=\gamma(v,u)^{-1}$$ for every pair of distinct $u, v \in V(D)$. Pause to note the following three facts about the inversion operation: (1) We have $\mathrm{inv}(\mathrm{inv}(\gamma))=\gamma$ for every $\Gamma$-labelling $\gamma$; (2)~$(D,\gamma)$ has no balanced cycles if and only if $(D,\mathrm{inv}(\gamma))$ has no balanced cycles; and (3)~for two $\Gamma$-labellings $\gamma$ and $\gamma'$ of $D$, we have that $\gamma$ and $\gamma'$ are shifting-equivalent if and only if  $\mathrm{inv}(\gamma)$ and $\mathrm{inv}(\gamma')$ are shifting-equivalent.
\section{Key Lemma and proof for groups of prime order}\label{sec:key}
In this section, we present our key lemma, and use it to quickly deduce that $n(\mathbb{Z}_p)=p+1$ for every prime $p$.

\begin{lemma}\label{lemma:getnontrivialstabilizer}
Let $(\Gamma,\cdot)$ be a non-trivial finite group. Let $(D,\gamma)$ be a complete $\Gamma$-labelled digraph on $|\Gamma|$ vertices which contains no balanced cycles. Then there exists a non-empty subset $X$ of $V(D)$ and a vertex $x \in X$ such that $|\mathcal{R}_{D,\gamma}(X,x)| \ge |X|$ and $\mathcal{R}_{D,\gamma}(X,x)$ has a non-trivial right stabilizer. 
\end{lemma}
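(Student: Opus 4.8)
The plan is to locate the desired pair by an extremal argument and then extract the stabilizer from a ``stuck'' configuration. Throughout write $\mathcal{R}(X,x):=\mathcal{R}_{D,\gamma}(X,x)$.

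\emph{Step 1 (extremal choice).} Singletons $(\{v\},v)$ already satisfy $|\mathcal{R}(X,x)|\ge|X|$, so among all pairs $(X,x)$ with $x\in X$ and $|\mathcal{R}(X,x)|\ge|X|$ there is one with $|X|$ as large as possible; fix it. (Equivalently: start at a vertex $x$ and greedily add a vertex $y$ whenever $\mathcal{R}(X\cup\{y\},x)\supsetneq\mathcal{R}(X,x)$; since the first vertex gives $|\mathcal{R}|=1$ and each subsequent addition raises $|\mathcal{R}|$ by at least $1$, the final pair satisfies $|\mathcal{R}(X,x)|\ge|X|$.) By maximality this pair is \emph{stuck}: $\mathcal{R}(X\cup\{y\},x)=\mathcal{R}(X,x)$ for all $y\in V(D)\setminus X$, and moreover either $X=V(D)$ or $|\mathcal{R}(X,x)|=|X|$ exactly (otherwise adding any $y$ would keep $|\mathcal{R}|\ge|X|+1$, contradicting maximality).

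\emph{Step 2 (the easy case).} If $X=V(D)$ then $|\mathcal{R}(X,x)|\ge|V(D)|=|\Gamma|$, so $\mathcal{R}(X,x)=\Gamma$, whose right stabilizer is $\Gamma$ itself. So assume $X\subsetneq V(D)$, set $W:=V(D)\setminus X\ne\emptyset$ and $m:=|X|=|\mathcal{R}(X,x)|$, and note the key numerical coincidence $|\Gamma\setminus\mathcal{R}(X,x)|=|\Gamma|-m=|W|$. It helps to first apply shifting operations at the vertices of $X$ to straighten a spanning out-branching of $D[X]$ rooted at $x$; after this normalization $\mathbf{1}\in\mathcal{R}_{D,\gamma}(X,x,v)$ for every $v\in X$ (one checks that shifting at a vertex $\ne x$ only right-translates the individual sets $\mathcal{R}_{D,\gamma}(X,x,v)$, so the stuck pair may be assumed normalized).

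\emph{Step 3 (extracting the stabilizer --- the crux).} It remains to prove $H:=\stab_r(\mathcal{R}(X,x))$ is nontrivial; suppose it is trivial. The stuck condition, unwound through $D[X\cup\{y\}]$ for $y\in W$, says that appending to any $x$--$a$ path in $D[X]$ the arc $(a,y)$ and then any continuation inside $X$ produces a value still in $\mathcal{R}(X,x)$; using $\mathbf 1\in\mathcal{R}_{D,\gamma}(X,x,a)$ this gives in particular $\gamma(a,y)\in\mathcal{R}(X,x)$ and $\gamma(a,y)\gamma(y,b)\in\mathcal{R}(X,x)$ for all $a,b\in X$, $y\in W$. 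On the other hand, the absence of balanced cycles forbids the value $\gamma(b,y)^{-1}$ from the set of values of $x$-to-$y$-to-$b$ paths inside $D[X\cup\{y\}]$, for every $b\in X$ and $y\in W$. Feeding these two families of constraints, the exact identity $|W|=|\Gamma\setminus\mathcal{R}(X,x)|$, and the completeness of $D$ between $X$ and $W$ into a counting argument, one aims to conclude that $\mathcal{R}(X,x)$ (equivalently its complement) is a union of cosets of a nontrivial subgroup --- a natural candidate being the subgroup generated by elements of the form $\gamma(a',y)\gamma(a,y)^{-1}$ --- contradicting $H=\{\mathbf 1\}$.

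\emph{Main obstacle.} I expect Step~3 to be the genuinely hard part. Every inclusion produced by the stuck condition is about \emph{extending} a simple path, whereas to show a group element right-stabilizes $\mathcal{R}(X,x)$ one needs to \emph{compose} such extensions, and the composed object must remain a simple path, not merely a walk; this is exactly where the non-abelian, path-based setting resists a one-line argument. The likely remedy is to work one extra vertex at a time (inside $D[X\cup\{y\}]$, where the path shapes through $y$ are limited) and to pin down the correspondence between $W$ and the missing values $\Gamma\setminus\mathcal{R}(X,x)$ by a careful count. A secondary subtlety is the choice of root in Step~1: simple examples (e.g.\ $\Gamma=\mathbb{Z}_2$ on two vertices) show that not every vertex $x$ yields a witness, so one must either argue with the globally $|X|$-maximal pair as above or, while deriving the contradiction, also invoke that growth started from vertices of $W$ produces no valid pair.
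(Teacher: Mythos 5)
Your Steps 1--2 are fine (the maximality argument correctly forces $|\mathcal{R}(X,x)|=|X|$ and $\mathcal{R}(X\cup\{y\},x)=\mathcal{R}(X,x)$ for all $y\notin X$, and the case $X=V(D)$ is handled exactly as in the paper), but Step 3 is not a proof: it is an announced plan whose central claim --- that the stuck conditions plus the count $|W|=|\Gamma\setminus\mathcal{R}(X,x)|$ force $\mathcal{R}(X,x)$ to be a union of cosets of a non-trivial subgroup --- is never established, and you yourself flag the obstruction (the constraints you extract concern single-arc right-multiplications of the individual sets $\mathcal{R}_{D,\gamma}(X,x,a)$, and composing them requires concatenations that need not remain simple paths). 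Since producing a non-trivial right stabilizer is precisely the content of the lemma, this is a genuine gap, not a routine verification. Note also that the hypothesis of no balanced cycles enters your sketch only through the loosely stated exclusion of certain path values, whereas any correct argument must use it in an essential way to manufacture a non-identity stabilizing element.

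For comparison, the paper works with the opposite extremal object: it assumes every candidate fails, i.e.\ it takes an inclusion-wise \emph{minimal} set $U$ such that $|\mathcal{R}_{D,\gamma}(U,u)|<|U|$ for \emph{every} root $u\in U$, builds the auxiliary digraph $F$ on $U$ with $(u,v)\in A(F)$ when $|\mathcal{R}_{D,\gamma}(U\setminus\{u\},v)|\ge |U|-1$ (minimality gives every vertex out-degree at least one), and takes a directed cycle $v_0\ldots v_{\ell-1}v_0$ in $F$. The size bound $|\mathcal{R}_{D,\gamma}(U,v_i)|\le |U|-1$ then squeezes the inclusions into equalities $\mathcal{R}_i=\mathcal{R}_{i+1}\cdot\gamma(v_{i+1},v_i)$, and chaining these around the cycle exhibits the label product of the reversed cycle as an element of $\stab_r(\mathcal{R}_0)$; the no-balanced-cycle hypothesis is exactly what makes this element non-identity. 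This cycle-chaining mechanism, absent from your proposal, is what converts local one-arc extension constraints into a single global stabilizing element, and it is the step your maximal-stuck-pair strategy would still have to replace.
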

\begin{proof}
Let us consider the set collection $$\mathcal{U}:=\{U\in 2^{V(D)}\setminus \{\emptyset\}|\forall u \in U:\text{ }|\mathcal{R}_{D,\gamma}(U,u)|<|U|\}.$$ 
Consider first the case that $\mathcal{U}=\emptyset$. In particular, we then have $V(D) \notin \mathcal{U}$. Thus there is a vertex $x \in V(D)$ such that $|\mathcal{R}_{D,\gamma}(V(D),x)| \ge |V(D)|=|\Gamma|$, which implies $\mathcal{R}_{D,\gamma}(V(D),x)=\Gamma$. Hence $\stab_r(\mathcal{R}_{D,\gamma}(V(D),x))=\stab_r(\Gamma)=\Gamma$ is indeed non-trivial, and the claim of the lemma is satisfied with $X:=V(D)$.

Thus we may assume that $\mathcal{U}\neq \emptyset$. Let us pick $U$ as an inclusion-wise minimal member of $\mathcal{U}$. Note that $U$ has at least two elements, since for every $u\in V(D)$ we have ${|\mathcal{R}_{D,\gamma}(\{u\},u)|=|\{\mathbf{1}\}|=1}$, so no singleton belongs to $\mathcal{U}$.

We now define $F$ to be the auxiliary digraph with vertex-set $V(F):=U$ such that an ordered pair $(u,v)$ of vertices is in $A(F)$ if and only if $|\mathcal{R}_{D,\gamma}(U\setminus \{u\},v)| \ge |U|-1$. Observe that every vertex in $F$ has out-degree at least one. Indeed, given a vertex $u \in U$ we have that $U\setminus\{u\} \notin \mathcal{U}$ by minimality, and thus there exists some vertex $v \in U\setminus\{u\}$ such that $|\mathcal{R}_{D,\gamma}(U\setminus \{u\},v)| \ge |U\setminus\{u\}|=|U|-1$, meaning $(u,v) \in A(F)$. 
Since $F$ is finite, it follows that there is a directed cycle $C=v_0v_1 \ldots v_{\ell-1}v_0$ in $F$. 
In the following, we will use index-addition modulo $\ell$. 
For every $i \in \{0,\ldots,\ell-1\}$, let us define $\mathcal{R}_i:=\mathcal{R}_{D,\gamma}(U\setminus \{v_{i-1}\},v_i)$, and note that $|\mathcal{R}_i| \ge |U|-1$ by the definition of $F$. 
Note also that for every $i\in \{\{0,\ldots,\ell-1\}$ we have 
$$ \mathcal{R}_i\cup(\mathcal{R}_{i+1}\cdot\gamma(v_{i+1},v_i)) \subseteq \mathcal{R}_{D,\gamma}(U,v_i),$$  by the definition of $\mathcal{R}_i$ and the fact that every directed path~$P$ in~${D[U\setminus\{v_i\}]}$ ending at~$v_{i+1}$ can be extended to a directed path in~$D[U]$ ending at~$v_i$ via the arc~${(v_{i+1},v_i)}$. 
Since $U \in \mathcal{U}$ we have that $\mathcal{R}_{D,\gamma}(U,v_i)$ is of size at most $|U|-1$. Hence, the above inclusion implies that $\mathcal{R}_i=\mathcal{R}_{i+1}\cdot \gamma(v_{i+1},v_i)$. 

Consequently, we have 
\begin{align*}
    \mathcal{R}_0&=\mathcal{R}_1\cdot\gamma(v_{1},v_0)=(\mathcal{R}_2\cdot \gamma(v_2,v_1))\cdot\gamma(v_1,v_0)=\dots \\
    &=\mathcal{R}_{\ell-1}\cdot\gamma(v_{\ell-1},v_{\ell-2})\cdot\phantom{.}\dots\phantom{.}\cdot\gamma(v_1,v_0)=\mathcal{R}_0\cdot\gamma(v_0,v_{\ell-1})\cdot \ldots \cdot\gamma(v_1,v_0).
\end{align*}
Finally, this means that $\gamma(v_0,v_{\ell-1})\cdot\gamma(v_{\ell-1},v_{\ell-2})\cdot\ldots\cdot\gamma(v_1,v_0) \in \stab_r(\mathcal{R}_0)$. Since by assumption the directed cycle $v_0 v_{\ell-1}\ldots v_1v_0$ in $D$ is not balanced, it follows that the right-stabilizer of $\mathcal{R}_0=\mathcal{R}_{D,\gamma}(U\setminus\{v_{\ell-1}\},v_0)$ is a non-trivial subgroup of $\Gamma$. The claim of the lemma is now satisfied with $X:=U\setminus \{v_{\ell-1}\}$ and~$x:=v_0 \in X$.
\end{proof}

Let us now show how one can use Lemma~\ref{lemma:getnontrivialstabilizer} to deduce the correctness of Theorem~\ref{thm:main} for groups of prime order. To do so, it will be convenient (also for the proof of Theorem~\ref{thm:main} in the next section) to introduce the notion of efficient and super-efficient tuples, as follows. 

\begin{definition}
Let $(\Gamma,\cdot)$ be a group, and $(D,\gamma)$ a $\Gamma$-labelled complete digraph. An \emph{\textcolor{red}{efficient}} tuple in $(D,\gamma)$ is of the form $(X,u,v,\gamma',R)$, where $X\subseteq V(D)$ and $u, v \in X$ are distinct vertices, $\gamma'$ is a $\Gamma$-labelling of $D$ that is shifting-equivalent to $\gamma$ and $R\subseteq \mathcal{R}_{D,\gamma'}(X,u,v)$ has a non-trivial right stabilizer and satisfies $|R|\ge |X|-1$.

If we even have $|R|\ge |X|$, then we call $(X,u,v,\gamma',R)$ \textcolor{red}{\emph{super-efficient}}.
\end{definition}

Let us note (for later use) the following consequence of our key Lemma~\ref{lemma:getnontrivialstabilizer}, which guarantees the existence of efficient tuples in balanced-cycle-free $\Gamma$-labelled complete digraphs. It immediately implies the statement of Theorem~\ref{thm:main} for groups of prime order, which we deduce thereafter.

\begin{lemma}\label{lem:saturatedexists}
Let $(\Gamma,\cdot)$ be a finite group. Let $(D,\gamma)$ be a complete $\Gamma$-labelled digraph on $|\Gamma|+1$ vertices which contains no balanced cycles. Then there exists an efficient tuple in $(D,\gamma)$.
\end{lemma}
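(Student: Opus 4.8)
The plan is to reduce to the situation of the key lemma (Lemma~\ref{lemma:getnontrivialstabilizer}) by first deleting a single cleverly-chosen vertex. Since $(D,\gamma)$ is a complete $\Gamma$-labelled digraph on $|\Gamma|+1$ vertices with no balanced cycle, for any vertex $w\in V(D)$ the digraph $D-w$ is a complete $\Gamma$-labelled digraph on exactly $|\Gamma|$ vertices, still with no balanced cycle. Thus Lemma~\ref{lemma:getnontrivialstabilizer} applies to $(D-w,\gamma)$ and yields a non-empty $X\subseteq V(D)\setminus\{w\}$ and $x\in X$ with $|\mathcal{R}_{D,\gamma}(X,x)|\ge |X|$ and $\stab_r(\mathcal{R}_{D,\gamma}(X,x))$ non-trivial. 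The remaining task is to convert this "paths starting at $x$" set $\mathcal{R}_{D,\gamma}(X,x)$ into a "paths from $u$ to $v$" set $\mathcal{R}_{D,\gamma'}(X,u,v)$ after a shift, while retaining a subset $R$ of size at least $|X|-1$ (or $|X|$) with a non-trivial right stabilizer.

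First I would recall the standard fact about shifting: if $\gamma'$ is obtained from $\gamma$ by shifting by $g$ at a vertex $v$, then for any path $P$ ending at $v$ one has $\gamma'(P)=\gamma(P)\cdot g$, and for any path $P$ starting at $v$ one has $\gamma'(P)=g^{-1}\cdot\gamma(P)$, while paths avoiding $v$ as an endpoint are unaffected; shifting at interior vertices of $P$ leaves $\gamma(P)$ unchanged. Now pick any vertex $v\in X\setminus\{x\}$ (possible since $|X|\ge 2$, as a singleton cannot have a non-trivial right stabilizer while having $|\mathcal{R}|\ge 1$... actually more carefully: if $|X|=1$ then $\mathcal{R}_{D,\gamma}(X,x)=\{\mathbf 1\}$ has stabilizer $\{\mathbf 1\}$, trivial, contradiction, so indeed $|X|\ge 2$). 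The set $\mathcal{R}_{D,\gamma}(X,x,v)$ of path-values from $x$ to $v$ inside $D[X]$ is a subset of $\mathcal{R}_{D,\gamma}(X,x)$. The key observation is that every path in $\mathcal{P}_D(X,x)$ either ends at $v$ or can be extended by the arc $(\text{last vertex},v)$... no—rather, a cleaner route: set $u:=x$ and observe that for each $P\in\mathcal{P}_D(X,x)$ ending at some vertex $y\ne v$, appending the arc $(y,v)$ gives a path from $x$ to $v$; but this changes the value. Instead, the right approach is: among all the $\ge|X|$ values in $\mathcal{R}_{D,\gamma}(X,x)$, those realised by paths ending at $v$ form $\mathcal{R}_{D,\gamma}(X,x,v)$, and we should relate $\mathcal{R}_{D,\gamma}(X,x)$ to $\mathcal{R}_{D,\gamma}(X\setminus\{v\},x)\cup\big(\bigcup_{y}\mathcal{R}_{D,\gamma}(X\setminus\{v\}\cup\{v\},x,y)\cdot\gamma(y,v)\big)$—this is getting complicated, so let me describe the intended clean argument.

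The intended argument: take $v$ to be a vertex of $X\setminus\{x\}$, shift at $v$ by a suitable element $g$ so that in the new labelling $\gamma'$ the set $\mathcal{R}_{D,\gamma'}(X,x,v)$ is large. Concretely, pick any path $P_0\in\mathcal{P}_D(X,x)$ not ending at $v$, say ending at $y_0$, extend it by $(y_0,v)$, and shift at $v$ by $g:=\gamma(y_0,v)^{-1}\cdots$; more simply, observe that $\mathcal{R}_{D,\gamma}(X,x)\subseteq \mathcal{R}_{D,\gamma}(X,x,v)\cup\mathcal{R}_{D,\gamma}(X\setminus\{v\},x)$ is false in general but $\mathcal{R}_{D,\gamma}(X,x,v)\supseteq\{\gamma(P)\cdot\gamma(y,v): P\in\mathcal{P}_D(X\setminus\{v\},x)\text{ ending at }y\}$. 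Here is the crux: define $\gamma'$ by shifting at $v$ by $g^{-1}$ where $g$ is chosen below. For paths $P$ from $x$ to $v$ we get $\gamma'(P)=\gamma(P)\cdot g^{-1}$, hence $\mathcal{R}_{D,\gamma'}(X,x,v)=\mathcal{R}_{D,\gamma}(X,x,v)\cdot g^{-1}$, which has the same cardinality and whose right stabilizer is $g\cdot\stab_r(\mathcal{R}_{D,\gamma}(X,x,v))\cdot g^{-1}$, still non-trivial iff the original one is. Meanwhile $\mathcal{R}_{D,\gamma}(X,x)$ changes by: paths ending at $v$ get right-multiplied by $g^{-1}$, paths not ending at $v$ are unchanged (since $x\ne v$, shifting at $v$ only affects paths with $v$ as an endpoint, and the start is $x$). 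So $\mathcal{R}_{D,\gamma'}(X,x)=\mathcal{R}_{D,\gamma}(X\setminus\{v\},x)\cup\big(\mathcal{R}_{D,\gamma}(X,x,v)\cdot g^{-1}\big)$, wait this needs: $\mathcal{R}_{D,\gamma}(X,x)=\mathcal{R}_{D,\gamma}(X,x,v)\cup \mathcal{R}_{D,\gamma}^{\text{not-}v}(X,x)$ where the latter is paths in $D[X]$ starting at $x$ ending outside $v$—but such paths can still pass through $v$, and shifting at $v$ does NOT change their value (interior vertex), so indeed they are unchanged. Therefore $|\mathcal{R}_{D,\gamma'}(X,x)|=|\mathcal{R}_{D,\gamma}(X,x)|\ge|X|$ still, but now I want to extract the $u$-to-$v$ structure.

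Let me now state the plan crisply. We have $X$, $x$, with $\mathcal R:=\mathcal R_{D,\gamma}(X,x)$, $|\mathcal R|\ge|X|$, $H:=\stab_r(\mathcal R)\ne\{\mathbf 1\}$. Pick $v\in X\setminus\{x\}$ arbitrarily, set $u:=x$. I claim that after shifting at each vertex of $X\setminus\{u,v\}$ appropriately (or more simply, after one shift at $v$), we can arrange that $\mathcal{R}_{D,\gamma'}(X,u,v)$ contains a subset $R$ with $|R|\ge|X|-1$ and non-trivial right stabilizer. The mechanism: for every $P\in\mathcal P_D(X,u)$, the path $P$ ends at some vertex $y\in X$; if $y=v$ we keep it; if $y\ne v$ we extend by arc $(y,v)$ to get a path $P^+$ from $u$ to $v$ with $\gamma(P^+)=\gamma(P)\cdot\gamma(y,v)$. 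The issue is that $\gamma(y,v)$ depends on $y$. This is exactly what shifting fixes: shift at $v$ so that... no, one shift at $v$ gives one value $g^{-1}$ added to all incoming arcs, i.e. $\gamma'(y,v)=\gamma(y,v)g^{-1}$, so $\gamma'(P^+)=\gamma(P)\gamma(y,v)g^{-1}$, still $y$-dependent. So that alone doesn't homogenize.

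Given the difficulty I see in homogenizing, the actually-intended proof is surely simpler and I should describe it as: delete a vertex $w$, apply the key lemma to get $(X,x)$ with $\mathcal R_{D,\gamma}(X,x)$ of size $\ge|X|$ and non-trivial right stabilizer; then pick $v\in X\setminus\{x\}$, consider paths from $x$ that are forced to end at $v$ by one extra arc, and absorb the loss of one element into the "$|X|-1$" slack of the efficient-tuple definition, after performing a single shift at $v$ to realign values. Concretely: $\mathcal R_{D,\gamma}(X,x,v)\cup\{\gamma(P)\gamma(y,v): P\in\mathcal P_D(X\setminus\{v\},x),\,P\text{ ends at }y\}\subseteq\mathcal R_{D,\gamma}(X,x,v)$; since every value in $\mathcal R_{D,\gamma}(X,x)$ except possibly those from the single trivial path $\{x\}$ arises from a path with a last arc we could reroute to $v$... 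Actually the cleanest correct statement: $\bigl|\mathcal R_{D,\gamma}(X,x,v)\bigr|\ge\bigl|\mathcal R_{D,\gamma}(X\setminus\{v\},x)\bigr|$ because the map $P\mapsto P\!+\!(y,v)$ is injective on path-values up to the at-most-one-dimensional ambiguity... The honest version of the plan: the main obstacle is precisely this bookkeeping — showing that from a large ``paths-from-$x$'' set one extracts a large ``paths-from-$u$-to-$v$'' set (losing at most one element, which the efficient-tuple slack permits) together with a shift that preserves the non-trivial right stabilizer. Once that lemma-internal step is set up, choosing $u:=x$, choosing $v\in X\setminus\{u\}$, taking $R$ to be the image of $\mathcal R_{D,\gamma}(X,x)$ under ``append arc to $v$, then shift at $v$ to cancel that arc'' finishes it, with $|R|\ge|X|-1$ and $\stab_r(R)$ conjugate to $\stab_r(\mathcal R_{D,\gamma}(X,x))\ne\{\mathbf 1\}$. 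I expect the append-and-shift homogenization to be the one genuinely delicate point; everything else (applying the key lemma after deleting a vertex, verifying $|X|\ge 2$, tracking how shifting conjugates stabilizers) is routine.
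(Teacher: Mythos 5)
There is a genuine gap, and you in fact flag it yourself: your plan never resolves the ``append-and-shift homogenization'' step, and as set up it cannot be resolved. If you first apply Lemma~\ref{lemma:getnontrivialstabilizer} to the \emph{original} labelling and only then try to turn the set $\mathcal{R}_{D,\gamma}(X,x)$ of paths-from-$x$ values into a set of $u$--$v$ path values by appending a last arc $(y,v)$, the appended factor $\gamma(y,v)$ depends on the endpoint $y$, and a single shift at $v$ multiplies all incoming arcs by the \emph{same} element $g^{-1}$, so it cannot equalize these factors. To equalize them you would have to shift at each $y$ separately by $\gamma(y,v)^{-1}$, but shifting at vertices of $X$ alters the arc labels inside $D[X]$ and hence destroys the set you just obtained from the key lemma. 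Choosing $v$ inside $X$ creates a further problem you also brush against: paths from $x$ inside $D[X]$ may already pass through $v$, so they cannot be extended to end at $v$ at all.

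The paper's proof avoids all of this by reversing the order of operations and by making the terminal vertex of the tuple be the \emph{deleted} vertex rather than a vertex of $X$. Fix $v_0\in V(D)$ and first shift, at every $w\neq v_0$, by $\gamma(w,v_0)^{-1}$; the resulting labelling $\gamma'$ is shifting-equivalent to $\gamma$ (so still balanced-cycle-free) and satisfies $\gamma'(w,v_0)=\mathbf{1}$ for all $w$. Only now apply Lemma~\ref{lemma:getnontrivialstabilizer}, to $(D-v_0,\gamma')$, obtaining $X'$ and $x$ with $|\mathcal{R}_{D,\gamma'}(X',x)|\ge|X'|$ and non-trivial right stabilizer. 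Setting $X:=X'\cup\{v_0\}$, $u:=x$, $v:=v_0$ and $R:=\mathcal{R}_{D,\gamma'}(X,x,v_0)$, every path counted in $\mathcal{R}_{D,\gamma'}(X',x)$ avoids $v_0$ and can be extended into $v_0$ by an arc of label $\mathbf{1}$, and conversely every $x$--$v_0$ path in $D[X]$ ends with such an arc; hence $R=\mathcal{R}_{D,\gamma'}(X',x)$ with no loss at all, and $(X,x,v_0,\gamma',R)$ is efficient since $|R|\ge|X'|=|X|-1$. So the missing idea is precisely ``shift everything onto the arcs entering a reserved vertex $v_0$ before invoking the key lemma, and use $v_0$ as the endpoint $v$''; without it, the step you identify as delicate is not merely bookkeeping but an actual obstruction to your route.
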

\begin{proof}
Let $v_0$ be an arbitrary vertex of $D$. Let $\gamma'$ be obtained from $\gamma$ by shifting by $\gamma(v_0,w)$ at $w$ for every $w\in V(D)\setminus \{v_0\}$ in some order. Then $\gamma'$ is shifting-equivalent to $\gamma$ (so in particular, there are no balanced cycles in $D$ with respect to~$\gamma'$), and $\gamma'(v_0,w)=\mathbf{1}$ for every $w \in V(D)\setminus \{v_0\}$. 

By applying Lemma~\ref{lemma:getnontrivialstabilizer} to $D-v_0$ we obtain a subset $X'\subseteq V(D-v_0)$ and a vertex $x \in X'$ such that $|\mathcal{R}_{D,\gamma'}(X',x)|\ge |X'|$ and $\mathcal{R}_{D,\gamma'}(X',x)$ has a non-trivial right stabilizer. Let~$X:=\{v_0\} \cup X'$ and $R:=\mathcal{R}_{D,\gamma'}(X,v_0,x)$. We claim that~$(X,v_0,x,\gamma',R)$ is efficient.

To verify this, note that, since~$\gamma'(v_0,w)=\mathbf{1}$ for every~$w \in V(D)\setminus \{v_0\}$, we have ${R=\mathcal{R}_{D,\gamma'}(X,v_0,x)=\mathcal{R}_{D,\gamma'}(X',x)}$. Hence, we have ${|R|\ge |X'|=|X|-1}$ and~$R$ has a non-trivial right stabilizer, as desired.
\end{proof}

The reason why (super-)efficient tuples are useful in the context of finding balanced cycles in complete digraphs is that if $(X,u,v,\gamma',R)$ is an efficient tuple and the inverse value $\gamma'(v,u)^{-1}$ of the ``back-arc'' $(v,u)$ lies in $R$, then one can combine an appropriate $u$-$v$-path $P$ with value $\gamma'(P)=\gamma'(v,u)^{-1}$ together with the arc $(v,u)$ to find a balanced cycle.
To illustrate this, in the following we show how one can easily deduce the statement of Theorem~\ref{thm:main} for groups of prime order from Lemma~\ref{lem:saturatedexists}.

\begin{corollary}
For every prime number $p$, we have $n(\mathbb{Z}_p)=p+1$.
\end{corollary}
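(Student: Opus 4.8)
The plan is to combine Lemma~\ref{lem:saturatedexists} with the special arithmetic of $\mathbb{Z}_p$. First I would recall that the lower bound $n(\mathbb{Z}_p)\ge p+1$ is already established by the generic construction in the introduction (generator on increasing arcs, identity on decreasing arcs), so the only thing to prove is the upper bound $n(\mathbb{Z}_p)\le p+1$. Suppose for contradiction that $D$ is a complete $\mathbb{Z}_p$-labelled digraph on $p+1$ vertices with no balanced cycle. By Lemma~\ref{lem:saturatedexists} there is an efficient tuple $(X,u,v,\gamma',R)$, so $R\subseteq \mathcal{R}_{D,\gamma'}(X,u,v)$ has a non-trivial right stabilizer and $|R|\ge |X|-1\ge 1$.

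The key point is that in $\mathbb{Z}_p$ with $p$ prime, the only subgroups are $\{0\}$ and $\mathbb{Z}_p$ itself; hence a set with non-trivial (right) stabilizer must be stabilized by all of $\mathbb{Z}_p$, which forces $R=\mathbb{Z}_p$ (a non-empty union of cosets of the whole group). In particular $\gamma'(v,u)^{-1}\in R=\mathbb{Z}_p$. As noted in the paragraph preceding the corollary, this immediately yields a balanced cycle: pick a directed $u$–$v$-path $P$ in $D[X]$ with $\gamma'(P)=\gamma'(v,u)^{-1}$ (equivalently, in additive notation, $\gamma'(P)=-\gamma'(v,u)$), and close it up with the arc $(v,u)$; the cumulative label around this directed cycle is the identity. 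Since $\gamma'$ is shifting-equivalent to $\gamma$, this balanced cycle exists with respect to $\gamma$ as well, contradicting our assumption.

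The only mild subtlety — and the one step worth spelling out — is the translation from ``$R$ has a non-trivial right stabilizer'' to ``$R=\mathbb{Z}_p$'': one uses that any non-empty subset of $\Gamma$ is a union of cosets of its right stabilizer, so when that stabilizer is all of $\mathbb{Z}_p$ the set is a union of copies of $\mathbb{Z}_p$ inside $\mathbb{Z}_p$, i.e. equals $\mathbb{Z}_p$. Everything else is a direct invocation of the definition of efficient tuple and of shifting-invariance of balanced cycles. There is no real obstacle here; the corollary is essentially a one-line consequence of Lemma~\ref{lem:saturatedexists} once the primality of $p$ is used to eliminate the possibility of a proper non-trivial stabilizer. (The genuine difficulties of Theorem~\ref{thm:main} only appear for non-prime odd order, where proper non-trivial subgroups exist and the efficient tuple must be iteratively improved, which is handled in Section~\ref{sec:proof}.)
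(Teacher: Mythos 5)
Your proof is correct and follows essentially the same route as the paper: invoke Lemma~\ref{lem:saturatedexists} to obtain an efficient tuple, use primality to conclude $\stab_r(R)=\mathbb{Z}_p$ and hence $R=\mathbb{Z}_p$ (since a non-empty $R$ is a union of cosets of its stabilizer), then close a $u$--$v$-path of value $-\gamma'(v,u)$ with the arc $(v,u)$ and use shifting-equivalence to contradict the absence of balanced cycles. No gaps.
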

\begin{proof}
The lower bound $n(\mathbb{Z}_p)\ge p+1$ was already observed in~\cite{MS}, so let us prove $n(\mathbb{Z}_p)\le p+1$. Towards a contradiction, suppose that there exists a $\mathbb{Z}_p$-labelled complete digraph $(D,\gamma)$ on $p+1$ vertices with no balanced cycles. Then, by Lemma~\ref{lem:saturatedexists} there exists an efficient tuple $(X,u,v,\gamma',R)$ in $(D,\gamma)$. Then by definition, we have $|R|\ge |X|-1\ge 2-1=1$, so $R$ is non-empty, and $R$ has a non-trivial right stabilizer. Since the only non-trivial subgroup of $(\mathbb{Z}_p,+)$ is $\mathbb{Z}_p$ itself, this implies that $\stab_r(R)=\mathbb{Z}_p$ and hence $R=\mathbb{Z}_p$. Thus, we also have $-\gamma'(v,u)\in R\subseteq R_{D,\gamma'}(X,u,v)$. Let $P$ be a directed $u$-$v$-path in $D[X]$ such that $\gamma'(P)=-\gamma'(v,u)$. Then the directed cycle $P + (v,u)$ in $D$ is a balanced cycle in $(D,\gamma')$, and hence also a balanced cycle in $(D,\gamma)$, yielding the desired contradiction. This concludes the proof that $n(\mathbb{Z}_p)\le p+1$. 
\end{proof}
\section{Proof of Theorem~\ref{thm:main}}\label{sec:proof}
In this section, we present the full proof of our main result, Theorem~\ref{thm:main}. The rough outline is as follows: We proceed by contradiction, and consider a smallest counterexample to the theorem (in terms of the size of $|\Gamma|$) and a $\Gamma$-labelled digraph $(D,\gamma)$ on $|\Gamma|+1$ vertices without balanced cycles. The $\Gamma$-labelled digraph $(D,\mathrm{inv}(\gamma))$ then also forms a complete $\Gamma$-labelled digraph on $|\Gamma|+1$ vertices without balanced cycles.

The first step is to show that under these hypotheses, one can in fact find a super-efficient tuple in at least one of $(D,\gamma)$ and $(D,\mathrm{inv}(\gamma))$ (Claim~2). To do so, we show that a ``smallest'' among all efficient tuples in $(D,\gamma)$ or $(D,\mathrm{inv}(\gamma))$ (guaranteed to exist by Lemma~\ref{lem:saturatedexists}) can be augmented with two additional vertices to become super-efficient. This step relies on the fact that $\Gamma$ has no subgroups of order $2$ or $\frac{|\Gamma|}{2}$, which is the main reason that our result is so much easier to prove for odd groups than for even groups. Once Claim~2 is established, we proceed by considering a super-efficient tuple $(X,u,v,\gamma',R)$ where $X$ is inclusion-wise maximal. We then argue that one can either augment $X$ with two vertices to create a larger super-efficient tuple (which yields the desired contradiction), or that one can find a proper subgroup $\Gamma'$ of $\Gamma$ and a balanced-cycle-free $\Gamma'$-labelling of $D-X$, which we show to contain at least $|\Gamma'|+1$ vertices (this step requires the super-efficiency and would not work for efficient tuples). In this case, the desired contradiction then follows since we assumed $\Gamma$ to be a smallest counterexample. 

For both of the two main steps of the proof outlined above, it will be important to understand how the size of a subset $R$ of $\Gamma$ relates to the size $\{\mathbf{1},\mathbf{x}\}\cdot R$ for $\mathbf{x}\in \Gamma\setminus \{\mathbf{1}\}$, since this is the kind of transformation the subset $R$ of attainable path values in an efficient or super-efficient tuple will undergo when we add two new vertices to it. The following lemma gives a simple lower bound on the increase of the set-size in terms of the size of its right-stabilizer. This dependency on the size of the right stabilizer is the reason why in the definition of (super-)efficient tuples, we require the set $R$ to have a non-trivial right stabilizer: this property will guarantee a fast enough (``efficient'') increase of the size of the set $R$ of attainable path values when we augment the tuple with two carefully chosen vertices. 

\begin{lemma}\label{lemma:addtwoelements}
    Let~$S$ be a subset of a group~$\Gamma$.   If~${x \in \Gamma \setminus \stab_l(S)}$ and $S':=\{\mathbf{1},x\}\cdot S$, then $\stab_r(S)\subseteq\stab_r(S')$ and $|S'|\geq |S|+|\stab_r(S)|$. 
\end{lemma}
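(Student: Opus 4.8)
The plan is to prove both assertions by exploiting the coset structure of $S$ with respect to $H := \stab_r(S)$. Since $S$ is a union of right cosets of $H$, so is any left-translate $xS$, and hence so is $S' = S \cup xS$; this immediately gives $H \subseteq \stab_r(S')$, settling the first claim. For the size bound, the key observation is that $|xS| = |S|$, so $|S'| = |S \cup xS| = 2|S| - |S \cap xS|$, and it suffices to show $|S \cap xS| \le |S| - |H|$, i.e.\ that $S \cap xS$ omits at least one full right coset of $H$ that lies inside $S$.

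The main step is therefore to locate a right coset $Hg \subseteq S$ with $Hg \cap xS = \emptyset$. I would argue as follows. Both $S$ and $xS$ are unions of right $H$-cosets of the same cardinality $|S|/|H|$. If $S \cap xS$ contained every right $H$-coset of $S$, then $S \subseteq xS$, and by equal cardinality $S = xS$, forcing $x \in \stab_l(S)$, contrary to hypothesis. Hence at least one right coset $Hg$ contained in $S$ is not contained in $xS$; since $xS$ is itself a union of right $H$-cosets, ``not contained in $xS$'' upgrades to ``disjoint from $xS$''. That coset contributes $|H|$ elements to $S \setminus xS$, so $|S \setminus xS| \ge |H|$, and thus $|S'| = |xS| + |S \setminus xS| \ge |S| + |H|$, as required.

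I expect the only real subtlety to be making sure the direction of the stabilizers and translations is handled correctly: we are translating $S$ \emph{on the left} by $x$, and we are tracking the \emph{right} stabilizer, so the relevant structural fact is that left multiplication by $x$ permutes the set of right $H$-cosets (because $(xHg) = x(Hg)$ and $Hg$ ranges over right cosets, while $xHg$ need not be a coset — but $xS$ is still a union of right cosets since $xS \cdot h = x(Sh) = xS$ for all $h \in H$). Once one checks that $xS$, like $S$, is $H$-invariant on the right, the counting argument above is routine. No induction or case analysis is needed; the proof is a direct coset-counting argument of a few lines.
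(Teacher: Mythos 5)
Your proof is correct and follows essentially the same route as the paper's: both arguments rest on the fact that $S$, $xS$ and $S'=\{\mathbf{1},x\}\cdot S$ are invariant under right multiplication by $H=\stab_r(S)$ and hence are unions of cosets of the form $gH$, so that $xS\neq S$ (forced by $x\notin\stab_l(S)$, together with $|xS|=|S|$) yields a full spare coset of gain --- the paper exhibits such a coset inside $S'\setminus S$, while you count one inside $S\setminus xS$, which is the same argument in mirror image. One cosmetic point: under the paper's convention the sets you use are \emph{left} cosets $gH$ of $H$ (a right coset $Hg$ need not lie in $S$ at all), but since your argument only ever invokes the invariance $xS\cdot h=xS$ that you explicitly verify, this naming slip does not affect the proof.
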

\begin{proof}
Let $h\in \stab_r(S)$.
By definition, $S\cdot h=S$, so 
\[S'\cdot h=(\{\mathbf{1},x\}\cdot S)\cdot h=\{\mathbf{1},x\}\cdot (S\cdot h)=\{\mathbf{1},x\}\cdot S=S',\] 
and so $h \in \stab_r(S')$.
Hence $\stab_r(S)\subseteq \stab_r(S')$.
Note that $S=\mathbf{1}\cdot S\subseteq S'$. Since $x\notin \stab_l(S)$, we know there is some element $y\in S'\setminus S$.
Let $C=y\cdot \stab_r(S)$. We claim that $C \subseteq S'\setminus S$. Indeed, we have 
\begin{align*}
C&=y\cdot \stab_r(S)\subseteq S'\cdot \stab_r(S)\\
&=(\{\mathbf{1},x\}\cdot S)\cdot \stab_r(S)=\{\mathbf{1},x\}\cdot (S\cdot \stab_r(S))=\{\mathbf{1},x\}\cdot S=S'
\end{align*} 
by definition of the stabilizer. Furthermore, suppose towards a contradiction that $C\cap S \neq \emptyset$, and let $z \in C \cap S$. Then, by definition of $C$, there exists some $s\in \stab_r(S)$ such that $z=y\cdot s$. Hence, $y=(y\cdot s)\cdot s^{-1}=z\cdot s^{-1}\in S \cdot s^{-1}=S$, since $s^{-1} \in \stab_r(S)$ (recall that the stabilizer forms a subgroup). This contradicts that $y \in S'\setminus S$, and so indeed we must have $C\subseteq S'\setminus S$. Finally, this implies $|S'|\ge |S|+|C|=|S|+|\stab_r(S)|$, as desired. 
\end{proof}

With all necessary tools at hand, we are now ready to present the proof of Theorem~\ref{thm:main}.

\begin{proof}[Proof of Theorem~\ref{thm:main}]
Suppose towards a contradiction that there exists a finite group $(\Gamma,\cdot)$ of odd order such that $n(\Gamma)> |\Gamma|+1$, and let $\Gamma$ be chosen as a smallest group with these properties. In particular, this assumption implies that $n(\Gamma')\le |\Gamma'|+1$ for every proper subgroup $\Gamma'$ of $\Gamma$ (since every subgroup of a group of odd order must also have odd order). 

In the following, let~$D$ denote a complete digraph on $|\Gamma|+1$ vertices. The assumption $n(\Gamma)>|\Gamma|+1$ by definition implies that there exists a $\Gamma$-labelling~$\gamma$ of~$D$ such that there are no balanced cycles in~$(D,\gamma)$.

By Lemma~\ref{lem:saturatedexists}, we find that there exists at least one efficient tuple in $(D,\gamma)$. In the following, let $(X^\ast,u^\ast,v^\ast,\gamma^\ast,R^\ast)$ be chosen among all efficient tuples in $(D,\gamma)$ or $(D,\mathrm{inv}(\gamma))$ such that $|X^\ast|$ is minimized. Replacing $\gamma$ by $\mathrm{inv}(\gamma)$ if necessary, we may assume w.l.o.g.\footnote{Note that this w.l.o.g. assumption is indeed legal: Since the labellings $\gamma$ and $\mathrm{inv}(\gamma)$ behave completely symmetrically, and in particular, since $\mathrm{inv}(\mathrm{inv}(\gamma))=\gamma$, we can replace $\gamma$ by $\mathrm{inv}(\gamma)$ if necessary, keeping all the necessary properties for $(D,\mathrm{inv}(\gamma)$ required to proceed with the rest of the proof in the same way as we do here for $(D,\gamma)$.} throughout the rest of the proof that $(X^\ast,u^\ast,v^\ast,\gamma^\ast,R^\ast)$ is in fact an efficient tuple in $(D,\gamma)$.  Our intermediate goal will be to show that we can augment the above tuple with two more vertices to become super-efficient. 
Towards this goal, we start by making the following simple but crucial observation.

\medskip

\paragraph*{\textbf{Claim 1.} Let $(X,u,v,\gamma',R)$ be any efficient tuple in $(D,\gamma)$. Then $R\neq \Gamma$ and $|V(D)\setminus X|\ge \max\{|\stab_l(R)|, |\stab_r(R)|\}$. If $(X,u,v,\gamma',R)$ is super-efficient, then we have $|V(D)\setminus X|\ge \max\{|\stab_l(R)|,|\stab_r(R)|\}+1$.}
\begin{proof}[Proof of Claim~1.]
First suppose towards a contradiction that $R=\Gamma$. This means $\mathcal{R}_{D,\gamma'}(X,u,v)=\Gamma$, since $R \subseteq \mathcal{R}_{D,\gamma'}(X,u,v)$ by the definition of an efficient tuple. 
In particular, we find that $\gamma'(v,u)^{-1} \in \mathcal{R}_{D,\gamma'}(X,u,v)$. Hence, there exists a directed path $P$ in $D$ from $u$ to $v$ such that $\gamma'(P)=\gamma'(v,u)^{-1}$. But now the directed cycle in $D$ formed by joining the arc $(v,u)$ to $P$ is balanced with respect to $\gamma'$, contradicting the fact that $\gamma'$ is shifting-equivalent to $\gamma$ and thus has no balanced cycles. Hence $R\neq \Gamma$.

For the second part of the claim, note that $R$ can be written as a disjoint union of right-cosets of the subgroup $\stab_l(R)$ of $\Gamma$, and also as a disjoint union of left-cosets of the subgroup $\stab_r(R)$. Since~$R\neq \Gamma$, this means that~$R$ must be disjoint from at least one right-coset of $\stab_l(R)$, and from at least one left-coset of $\stab_r(R)$. Hence, we have $|R|\le |\Gamma|-\max\{|\stab_l(R)|,|\stab_r(R)|\}$. Recalling that~${|R|\ge |X|-1}$, this implies
${|X|\le |\Gamma|-\max\{|\stab_l(R)|,|\stab_r(R)|\}+1}$.
If~$(X,u,v,\gamma',R)$ is super-efficient then~${|X| \le |\Gamma|-\max\{|\stab_l(R)|,|\stab_r(R)|\}}$ since~${|R|\ge |X|}$. The fact that~$|V(D)|=|\Gamma|+1$ now implies the second part of the claim.
\end{proof}

We next show that we can use $X^\ast$ and two additional vertices to build a super-efficient tuple in $(D,\gamma)$. 

\medskip

\paragraph*{\textbf{Claim 2.} There exists a super-efficient tuple in $(D,\gamma)$.}
\begin{proof}[Proof of Claim~2.]
Let $\delta$ be a $\Gamma$-labelling obtained from $\gamma^\ast$ by shifting, for every vertex $w \in V(D)\setminus X^\ast$, by value $\gamma^\ast(w,u^\ast)^{-1}$ at $w$ in some order. 
Then for every $w \in V(D)\setminus X^\ast$ we have $\delta(w,u^\ast)=\mathbf{1}$. Furthermore, $\gamma^\ast$ and $\delta$ agree on all arcs inside of $X^\ast$, which implies that~${\mathcal{R}_{D,\delta}(X^\ast,u^\ast,v^\ast)=\mathcal{R}_{D,\gamma^\ast}(X^\ast,u^\ast,v^\ast)}$. In particular,~${R^\ast \subseteq \mathcal{R}_{D,\delta}(X^\ast,u^\ast,v^\ast)}$.

Let $\Gamma^\ast:=\stab_l(R^\ast)$. By Claim~1, we have that $\Gamma^\ast$ is a proper subgroup of $\Gamma$ and that $|V(D)\setminus X^\ast|\ge |\Gamma^\ast|$. 

Suppose first that for every pair of distinct vertices $w_1,w_2 \in V(D)\setminus X^\ast$, we have $\delta(w_1,w_2)\in \Gamma^\ast$. Then the restriction of $\delta$ to the complete digraph $D-X^\ast$ forms a $\Gamma^\ast$-labelling, and thus the same is true for the restriction of $\mathrm{inv}(\delta)$. Recall that since $\Gamma^\ast$ is a proper subgroup of $\Gamma$, we must have $n(\Gamma^\ast)\le |\Gamma^\ast|+1$. Thus, if we were to have $|V(D)\setminus X^\ast|\ge |\Gamma^\ast|+1$, then there would need to be a $\delta$-balanced cycle within $D-X^\ast$. However, such a cycle does not exist, since $\delta$ is shifting equivalent to $\gamma$, which by assumption has no balanced cycles. Hence, we have $|V(D)\setminus X^\ast|\le |\Gamma^\ast|$, which together with the above implies $|V(D)\setminus X^\ast|=|\Gamma^\ast|$. Together with Claim~1, this implies that $|\Gamma^\ast|\ge |\stab_r(R^\ast)|$. Since $\stab_r(R^\ast)\neq \{\bf{1}\}$ (by definition of an efficient tuple), this also means that $\Gamma^\ast$ is non-trivial. 

The latter ensures that we may now apply Lemma~\ref{lemma:getnontrivialstabilizer} to the complete digraph $D-X^\ast$ of order $|\Gamma^\ast|$ equipped with the $\Gamma^\ast$-labelling obtained by restricting $\mathrm{inv}(\delta)$. This yields the existence of a subset $X'\subseteq V(D)\setminus X^\ast$ and a vertex $x \in X'$ such that $\mathcal{R}_{D,\mathrm{inv}(\delta)}(X',x)\ge |X'|$ and $\mathcal{R}_{D,\mathrm{inv}(\delta)}(X',x)$ has a non-trivial right stabilizer in $\Gamma^\ast$ (and hence also in $\Gamma$). Since $\delta(w,u^\ast)=\mathbf{1}$ for every $w \in V(D)\setminus X^\ast$,  we also have $\mathrm{inv}(\delta)(u^\ast,w)=\mathbf{1}$ for every $w \in V(D)\setminus X^\ast$. The latter implies that $\mathcal{R}_{D,\mathrm{inv}(\delta)}(\{u^\ast\} \cup X',u^\ast,x)=\mathcal{R}_{D,\mathrm{inv}(\delta)}(X',x)$. Since $\delta$ is shifting-equivalent to $\gamma$, we have that $\mathrm{inv}(\delta)$ is shifting-equivalent to $\mathrm{inv}(\gamma)$. All in all, this establishes that the tuple $(\{u^\ast\}\cup X',u^\ast,x,\mathrm{inv}(\delta),\mathcal{R}_{D,\mathrm{inv}(\delta)}(X',x))$ is efficient in $(D,\mathrm{inv}(\gamma))$. Using our minimality assumption on $X^\ast$, we obtain that $|X^\ast|\le |X'\cup \{u^\ast\}|=|X'|+1$. Adding $|V(D)\setminus X^\ast|=|\Gamma^\ast|$ to both sides of this inequality yields that
$$|\Gamma|+1=|V(D)|=|X^\ast|+|V(D)\setminus X^\ast|\le |X'|+1+|V(D)\setminus X^\ast|\le 2|\Gamma^\ast|+1.$$

However, since $\Gamma^\ast$ is a proper subgroup of the group $\Gamma$ of odd order, we must have $|\Gamma^\ast|\le \frac{|\Gamma|}{3}$. Plugging this into the above yields a contradiction, as desired.

Hence our assumption above must have been wrong: There exist two distinct vertices $w_1, w_2 \in V(D)\setminus X^\ast$ such that $\delta(w_1,w_2)\notin \Gamma^\ast$. 

Let us now consider the set $R':=\{\mathbf{1},\delta(w_1,w_2)\}\cdot R^\ast$. We claim that $R'$ is a subset of ${\mathcal{R}_{D,\delta}(X\cup \{w_1,w_2\},w_1,v^\ast)}$. Indeed, let $r \in R'$ be arbitrary. Then, since $R^\ast \subseteq \mathcal{R}_{D,\delta}(X^\ast,u^\ast,v^\ast)$, there exists a directed path $P$ in $D[X^\ast]$ from $u^\ast$ to $v^\ast$ such that $r \in \{\delta(P),\delta(w_1,w_2)\cdot \delta(P)\}$. Let $Q$ denote the $w_1$-$v^\ast$-path in $D[X^\ast \cup \{w_1,w_2\}]$ that is obtained by prepending to $P$ the arc $(w_1,u^\ast)$ if $r=\delta(P)$, or the length-two segment $(w_1,w_2),(w_2,u^\ast)$ if $r=\delta(w_1,w_2)\cdot\delta(P)$. We can see that in each case, $r=\delta(Q) \in \mathcal{R}_{D,\delta}(X\cup \{w_1,w_2\},w_1,v^\ast)$, as desired. This shows that indeed, $R'\subseteq \mathcal{R}_{D,\delta}(X\cup \{w_1,w_2\},w_1,v^\ast)$.

Finally, we claim that $(X^\ast \cup \{w_1,w_2\},w_1,v^\ast,\delta,R')$ is a super-efficient tuple, which will conclude the proof of the claim. All that remains to be verified for this is that $|R'|\ge |X^\ast \cup \{w_1,w_2\}|=|X^\ast|+2$ and that $R'$ has a non-trivial right stabilizer. To see this, note that $\delta(w_1,w_2)\notin \Gamma^\ast=\stab_l(R^\ast)$, and hence we can apply Lemma~\ref{lemma:addtwoelements}, which yields that $|R'|=|\{\mathbf{1},\delta(w_1,w_2)\}\cdot R^\ast|\ge |R^\ast|+|\stab_r(R^\ast)|$, and that $\stab_r(R')\supseteq \stab_r(R^\ast)$. The latter directly implies that $R'$ has a non-trivial right-stabilizer, and the former implies, using that $|\stab_r(R^\ast)|\ge 3$ (since $|\Gamma|$ is of odd order) that $|R'|\ge |R^\ast|+3\ge (|X^\ast|-1)+3=|X^\ast|+2$, as desired. 

This shows that indeed, $(X^\ast \cup \{w_1,w_2\},w_1,v^\ast,\delta,R')$ is a super-efficient tuple, concluding the proof of Claim~2.
\end{proof}
Having established the existence of a super-efficient tuple in $(D,\gamma)$, we can now quite easily conclude the proof. Let $(X,u,v,\gamma',R)$ be chosen among all super-efficient tuples in $(D,\gamma)$ such that $X$ is inclusion-wise maximal. Let $\Gamma':=\stab_l(R)$. Then by Claim~1, we have $R\neq \Gamma$ (and thus $\Gamma'$ is a proper subgroup of $\Gamma$) and $|V(D)\setminus X|\ge |\Gamma'|+1$. Let $\gamma''$ be the $\Gamma$-labelling of $D$ obtained from $\gamma'$ by, for every $w \in V(D)\setminus X$, shifting by value $\gamma'(w,u)^{-1}$ at $w$. Then $\gamma''(w,u)=\mathbf{1}$ for every $w \in V(D)\setminus X$, and $\gamma''$ is shifting-equivalent to $\gamma$.

Since $\Gamma'$ is a proper subgroup of $\Gamma$, we have $n(\Gamma')\le |\Gamma'|+1\le |V(D)\setminus X|$. 
Since there are no balanced cycles in $D-X$ with respect to $\gamma''$, this implies that there must be two distinct vertices $w_1, w_2 \in V(D)\setminus X$ such that ${\gamma''(w_1,w_2) \notin \Gamma'=\stab_l(R)}$. 
We can therefore apply Lemma~\ref{lemma:addtwoelements} to find that the set $R'':=\{\mathbf{1},\gamma''(w_1,w_2)\}\cdot R$ satisfies ${|R''|\ge |R|+|\stab_{r}(R)|}$ and $\stab_r(R'')\supseteq \stab_r(R)$. 
Since $\stab_r(R)$ is a non-trivial subgroup of $\Gamma$, this yields that $\stab_r(R'')$ is non-trivial and that ${|R''|\ge |R|+2\ge |X\cup \{w_1,w_2\}|}$. 
Pause to note that this implies that the tuple $(X\cup \{w_1,w_2\},w_1,v,\gamma'',R'')$ is super-efficient. This however contradicts our assumption that $X$ is inclusion-wise maximal among super-efficient tuples. This is the desired contradiction which concludes the proof of Theorem~\ref{thm:main}.
\end{proof}

\end{document}